\let\TeXchi\chi
\newbox\chibox
\chibox \hbox{\raise\dp0 \box 0 }
\def\chi{\copy\chibox}
\begin{document}
\newtheorem{proposition}{Proposition}[section]
\newtheorem{theorem}{Theorem}[section]
\newtheorem{lemma}{Lemma}[section]
\newtheorem{corollary}{Corollary}[section]
\newtheorem{remark}{Remark}[section]
\renewcommand{\thesection}{\arabic{section}}
\renewcommand{\theequation}{\thesection.\arabic{equation}}
\renewcommand{\thetheorem}{\thesection.\arabic{theorem}}
\numberwithin{equation}{section}
\numberwithin{theorem}{section}
\numberwithin{proposition}{section}
\numberwithin{lemma}{section}
\numberwithin{remark}{section}
\setcounter{secnumdepth}{3}
\newcommand{\cl}{\centerline}
\newcommand{\sms}{\smallskip}
\newcommand{\ms}{\medskip}
\newcommand{\bs}{\bigskip}
\newcommand{\noi}{\noindent}
\newcommand{\itl}[1]{\textit{#1}}
\newcommand{\blf}[1]{\textbf{#1}}
\newcommand{\dsty}{\displaystyle}
\newcommand{\txty}{\textstyle}
\newcommand{\ssty}{\scriptstyle}
\newcommand{\tty}{\texttt}


\newcommand\Par{\mathhexbox278\,}


\newcommand{\al}{\alpha}
\newcommand{\Al}{\Alpha}
\newcommand{\be}{\beta}
\newcommand{\Be}{\Beta}
\newcommand{\Gm}{\Gamma}
\newcommand{\gm}{\gamma}
\newcommand{\dl}{\delta}
\newcommand{\Dl}{\Delta}
\newcommand{\lm}{\lambda}
\newcommand{\Lm}{\Lambda}
\newcommand{\kp}{\kappa}
\newcommand{\varep}{\varepsilon}
\newcommand{\eps}{\epsilon}
\newcommand{\vp}{\varphi}
\newcommand{\sig}{\sigma}
\newcommand{\Sig}{\Sigma}
\newcommand{\om}{\omega}
\newcommand{\Om}{\Omega}
\newcommand{\uom}{\mbox{\boldmath$\omega$}}
\newcommand{\btau}{\mbox{\boldmath$\tau$}}
\newcommand{\bnu}{\mbox{\boldmath$\nu$}}
\newcommand{\up}{\upsilon}
\newcommand{\z}{\zeta}


\newcommand{\df}[1]{\buildrel\mbox{\small def}\over{#1}}
\newcommand{\op}[1]{\buildrel\mbox{\tiny o}\over{#1}}
\newcommand{\db}{\prime\prime}
\newcommand{\bsl}{\backslash}
\newcommand{\lb}{\lbrack\!\lbrack}
\newcommand{\rb}{\rbrack\!\rbrack}
\newcommand\la{\langle}
\newcommand\ra{\rangle}
\newcommand{\ev}{\equiv}
\newcommand{\nev}{\not\equiv}
\newcommand{\nn}{\mathbb{N}}
\newcommand{\qq}{\mathbb{Q}}
\newcommand{\zz}{\mathbb{Z}}
\newcommand{\rr}{\mathbb{R}}
\newcommand{\rn}{\rr^N}
\newcommand{\cc}{\mathbb{C}}
\newcommand{\id}{\mathbb{I}}
\newcommand{\bo}{\mathbb{O}}

\newcommand{\amsb}[1]{\mathbb{#1}}
\newcommand{\mcl}[1]{\mathcal{#1}}
\newcommand{\bl}[1]{\mathbf{#1}}
\newcommand{\ov}[1]{\overline{#1}}
\newcommand{\wt}[1]{\widetilde{#1}}
\newcommand{\wh}[1]{\widehat{#1}}

\newcommand{\llra}{\leftrightarrow}
\newcommand{\lra}{\longrightarrow}
\newcommand{\LLR}{\Longleftrightarrow}
\newcommand{\LRA}{\Longrightarrow}
\newcommand{\LLA}{\Longleftarrow}


\newcommand{\bbox}{\vrule height.6em width.6em 
depth0em} 
\newcommand{\os}{\vbox{\hrule \hbox{\vrule 
height.6em depth0pt 
\hskip.6em \vrule height.6em depth0em}
\hrule}} 


\newcommand{\dvg}{\operatorname{div}}
\newcommand{\curl}{\operatorname{curl}}
\newcommand{\supp}{\operatorname{supp}}
\newcommand{\essup}{\operatornamewithlimits{ess\,sup}}
\newcommand{\essinf}{\operatornamewithlimits{ess\,inf}}
\newcommand{\essosc}{\operatornamewithlimits{ess\,osc}}
\newcommand{\osc}{\operatornamewithlimits{osc}}
\newcommand{\sign}{\operatorname{sign}}
\newcommand{\loc}{\operatorname{loc}}
\newcommand{\diam}{\operatorname{diam}}
\newcommand{\dist}{\operatorname{dist}}
\newcommand{\card}{\operatorname{card}}
\newcommand{\meas}{\operatorname{meas}}
\newcommand{\spn}{\operatorname{span}}
\newcommand{\dtm}{\operatorname{det}}
%


\newcommand{\overlim}{\mathop{\overline{\lim}}\limits}
\newcommand{\underlim}{\mathop{\underline{\lim}}\limits}
\newcommand{\ttop}[2]{\genfrac{}{}{0pt}{}{#1}{#2}}
\newcommand{\bcu}{\mathop{\txty{\bigcup}}\limits}
\newcommand{\bca}{\mathop{\txty{\bigcap}}\limits}
\newcommand{\bsu}{\mathop{\txty{\sum}}\limits}
\newcommand{\pro}{\mathop{\txty{\prod}}\limits}


\newcommand{\pl}{\partial}
\newcommand{\ptt}{\frac{\pl}{\pl t}}
\newcommand{\ppx}{\frac\pl{\pl x}}
\newcommand{\dds}{\frac d{ds}}
\newcommand{\ddt}{\frac d{dt}}

\newcommand{\intl}{\int\limits}
\newcommand{\iintl}{\iint\limits}
\def\Xint#1{\mathchoice
    {\XXint\displaystyle\textstyle{#1}}%
    {\XXint\textstyle\scriptstyle{#1}}%
    {\XXint\scriptstyle\scriptscriptstyle{#1}}%
    {\XXint\scriptscriptstyle\scriptscriptstyle{#1}}%
    \!\int}
\def\XXint#1#2#3{\setbox0=\hbox{$#1{#2#3}{\int}$}
    \vcenter{\hbox{$#2#3$}}\kern-0.5\wd0}
\def\bint{\Xint-}
\def\dashint{\Xint{\raise4pt\hbox to7pt{\hrulefill}}}
\def\dashiint{\bint\kern-0.15cm\bint}

\newcommand{\ovl}[3]{\int_{#1}^{#2}\kern-#3pt\raise4pt\hbox to7pt{\hrulefill}\ }

\newcommand{\ovll}[3]{\intl_{#1}^{#2}\kern-#3pt\raise4pt\hbox to7pt{\hrulefill}\ }

\newcommand{\tvl}[2]{\iint_{#1}\kern-#2pt\raise4pt\hbox to7pt{\hrulefill}\ }



\newcommand{\omt}{\Om_T}
\newcommand{\plo}{\partial\Omega}
\newcommand{\ovo}{\bar{\Om} }

%
\newcommand{\ci}[1]{C^\infty\!\left({#1}\right)}
\newcommand{\cio}[1]{C_o^\infty\!\left({#1}\right)}
\newcommand{\lloc}[1]{L_{\loc}\!\left({#1}\right)}
\newcommand{\xy}{|x-y|}


\newcommand{\intom}{\intl_{\Om}}
\newcommand{\intbo}{\intl_{\plo}}
\newcommand{\inom}{\int_{\Om}}
\newcommand{\inbo}{\int_{\plo}}
\newcommand{\intrn}{\intl_{\rn}}


\newcommand{\bye}{
\input harnack_mono.mac
\title{A Short Proof of H\"older Continuity for Functions in DeGiorgi Classes}
\author{
Colin Klaus\footnote{Author is grateful to the Mathematical Biosciences Institute for partially supporting this research. MBI receives funding through the National Science Foundation grant DMS 1440386}\\
The Mathematical Biosciences Institute\\
Ohio State University\\
Columbus, OH 43210, USA\\
email: {\tt klaus.68@mbi.osu.edu}\\
\and
Naian Liao\footnote{Corresponding author. Supported by NSFC 11701054}\\
College of Mathematics and Statistics\\
Chongqing University\\
Chongqing, 401331, China\\
email: {\tt liaon@cqu.edu.cn}}
\date{}
\maketitle
\begin{abstract}
The goal of this note is to give an alternative proof of local H\"older continuity for functions
in DeGiorgi classes based on an idea of Moser.
\vskip.2truecm
\noindent{\bf AMS Subject Classification (2010):} Primary 35B65, Secondary 35J62, 35J92, 49N60
\vskip.2truecm
\noindent{\bf Key Words:} DeGiorgi classes, H\"older continuity, Moser's method
\end{abstract}

\section{Introduction and Main Results}
Let $E$ be an open set in $\rr^N$ and $K_{\rho}(y)$ be a cube of edge $2\rho$
centered at $y\in\rr^N$. When $y=0$ we simply write $K_{\rho}$.
The DeGiorgi classes $[DG]_p^{\pm}(E;\gm)$, with some $p>1$
and $\gm>0$, consist of functions $u\in W^{1,p}_{\loc}(E)$ satisfying
\begin{equation}
\int_{K_{\rho}(y)}|D(u-k)_{\pm}|^p\,dx\le\frac{\gm}{(R-\rho)^p}\int_{K_{R}(y)}|(u-k)_{\pm}|^p\,dx
\end{equation}
for any pair of cubes $K_\rho (y)\subset K_{R}(y)\subset E$, and all $k\in\rr$. We further define
\begin{equation*}
[DG]_p(E;\gm)=[DG]_p^+(E;\gm)\cap[DG]_p^-(E;\gm).
\end{equation*}
In the sequel, we refer to the set of parameters $\{p,\,\gm,\,N\}$ as the {\it data}
and use $C$ as a generic constant that can be quantitatively determined {apriori}
only in terms of the data.

For a function $u\in [DG]_p(E;\gm)$ and $K_{2\rho}(y)\subset E$, we set
\begin{equation*}
\mu^+=\essup_{K_{2\rho}(y)}u,\quad\mu^-=\essinf_{K_{2\rho}(y)}u,\quad\om(2\rho)=\essosc_{K_{2\rho}(y)}u=\mu^+-\mu^-.
\end{equation*}
When there is no ambiguity, we scratch ``ess" in the following.

Now we state the following celebrated theorem of DeGiorgi, c.f. \cite{DG, DB, LU}.
\begin{theorem}\label{Thm:1:1}
\begin{enumerate} 
\item[(I)] If $u\in [DG]_p^{\pm}(E;\gm)$, then there is a constant $C>0$ depending only on the data, such that
\begin{equation}
\sup_{K_\rho(y)}(u-k)_{\pm}\le\frac{C}{(R-\rho)^N}\int_{K_R(y)}(u-k)_{\pm}\,dx
\end{equation}
for any pair of cubes $K_\rho(y)\subset K_R(y)\subset E$ and all $k\in\rr$.
\item[(II)] If $u\in [DG]_p(E;\gm)$, there are constants $C>0$ and $0<\al<1$ depending only on the data, such that
for every pair of cubes $K_\rho(y)\subset K_R(y)\subset E$, we have
\begin{equation*}
\om(\rho)\le C\om(R)\bigg(\frac{\rho}{R}\bigg)^{\al}
\end{equation*}
\end{enumerate}
\end{theorem}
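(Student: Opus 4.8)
\medskip
\noindent\textbf{Proof proposal.}
The plan is to derive both statements from the defining energy inequality alone, using De Giorgi's iteration together with standard Sobolev and Poincar\'e inequalities on cubes, and to treat (I) first since (II) re-uses its machinery.

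\emph{Part (I).} I would first establish the $L^p$ version,
\[
\sup_{K_r(y)}(u-k)_{\pm}\le\frac{C}{(s-r)^{N/p}}\Big(\int_{K_s(y)}(u-k)_{\pm}^p\,dx\Big)^{1/p},\qquad K_r(y)\subset K_s(y)\subset E,
\]
by the classical De Giorgi iteration. For such $r<s$ one sets $r_n=r+2^{-n}(s-r)$, levels $k_n=k+d\,(1-2^{-n})$ with a parameter $d>0$, and cut-offs $\zeta_n$ equal to $1$ on $K_{r_{n+1}}(y)$, supported in $K_{r_n}(y)$, with $|D\zeta_n|\le C\,2^n/(s-r)$. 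Inserting the truncation $(u-k_{n+1})_{+}$ into the energy inequality, applying a Sobolev embedding to $\zeta_n(u-k_{n+1})_{+}$, and using Chebyshev's inequality on $\{u>k_{n+1}\}$ to estimate its measure, one gets for $Y_n:=|K_{r_n}|^{-1}\int_{K_{r_n}(y)}(u-k_n)_{+}^p\,dx$ a recursion $Y_{n+1}\le C\,b^{\,n}\,Y_n^{1+\nu}$ with $\nu\sim p/N$ and $b>1$ depending only on the data and on $d,\ s-r$. The fast geometric convergence lemma then yields $Y_n\to0$, hence $\sup_{K_r(y)}(u-k)_{+}\le d$, as soon as $d$ is taken comparable to $(s-r)^{-N/p}\big(\int_{K_s}(u-k)_{+}^p\,dx\big)^{1/p}$; the case of $(u-k)_{-}$ is symmetric. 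To reach the $L^1$ right-hand side of the Theorem, put $\psi(t):=\sup_{K_t(y)}(u-k)_{+}$ for $\rho\le t\le R$; using $\int_{K_s}(u-k)_{+}^p\,dx\le\psi(s)^{p-1}\int_{K_R}(u-k)_{+}\,dx$ in the $L^p$ estimate and then Young's inequality give
\[
\psi(r)\le\tfrac12\,\psi(s)+\frac{C}{(s-r)^N}\int_{K_R(y)}(u-k)_{+}\,dx\qquad(\rho\le r<s\le R),
\]
after which the standard iteration lemma for inequalities of this form gives $\psi(\rho)\le C(R-\rho)^{-N}\int_{K_R(y)}(u-k)_{+}\,dx$, which is (I).

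\emph{Part (II).} It suffices to prove a one-step oscillation reduction: there are $\sigma,\eta\in(0,1)$, depending only on the data, with $\om(\sigma R)\le\eta\,\om(R)$ whenever $K_R(y)\subset E$; iterating this along the scales $\sigma^m R$ then yields $\om(r)\le C\,\om(R)(r/R)^{\al}$ with $\al=\log(1/\eta)/\log(1/\sigma)\in(0,1)$. To prove it, fix $K_{2\rho}(y)\subset E$ and set $\mu^{\pm},\ \om=\om(2\rho)$. Since $\mu^+-\tfrac{\om}{2}=\mu^-+\tfrac{\om}{2}$, the sets $\{u>\mu^+-\tfrac{\om}{2}\}$ and $\{u<\mu^-+\tfrac{\om}{2}\}$ are disjoint with union inside $K_\rho(y)$, so one of them has measure $\le\tfrac12|K_\rho|$; by symmetry assume $|\{u>\mu^+-\tfrac{\om}{2}\}\cap K_\rho(y)|\le\tfrac12|K_\rho|$. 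Now two steps. \emph{(i) Measure shrinking}: applying the energy inequality at the levels $\mu^+-\om2^{-j}$ and De Giorgi's isoperimetric inequality — which bounds $|\{u>\mu^+-\om2^{-j-1}\}\cap K_\rho|$ through $|\{u<\mu^+-\om2^{-j}\}\cap K_\rho|$ and $\int_{\{\mu^+-\om2^{-j}<u<\mu^+-\om2^{-j-1}\}\cap K_\rho}|Du|\,dx$ — and then summing a telescoping sum over $j$, one shows $|\{u>\mu^+-\om2^{-J}\}\cap K_\rho(y)|\le\nu_0|K_\rho|$ for an integer $J$ depending only on the data and on the constant $\nu_0$ of step (ii). \emph{(ii) De Giorgi iteration from measure smallness}: there is $\nu_0\in(0,1)$, depending only on the data, for which the previous bound forces $u\le\mu^+-\om2^{-J-1}$ a.e.\ in $K_{\rho/2}(y)$ — this is the iteration of Part (I) run at the levels $\mu^+-\om2^{-j}$, $j\ge J$, with $|\{u>\mu^+-\om2^{-J}\}\cap K_\rho|\le\nu_0|K_\rho|$ providing the smallness needed to launch the geometric convergence. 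Combining the two, $\sup_{K_{\rho/2}}u\le\mu^+-\om2^{-J-1}$ while $\inf_{K_{\rho/2}}u\ge\mu^-$, so $\om(\rho/2)\le(1-2^{-J-1})\,\om(2\rho)$; the one-step reduction holds with $\sigma=\tfrac14$ and $\eta=1-2^{-J-1}$ (the other alternative is handled symmetrically, raising $\mu^-$).

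\emph{Main obstacle.} The crux is step (i) of Part (II): the quantitative shrinking of the measure of the super-level sets across the intermediate levels, which is exactly where one uses that $u\in W^{1,p}_{\loc}$ with the energy bound (boundedness alone is not enough), and which is the longest part of the classical proof. In the spirit of the title I would expect De Giorgi's isoperimetric inequality to be replaceable by a logarithmic estimate following Moser — roughly, a bound $\int_{K_\rho}|Dv|^p\le C\,M\,\rho^{N-p}$ for the truncated function $v=\min\{\log\tfrac{\om}{\mu^+-u+\varepsilon},\ M\}$, obtained by summing the energy inequality over the dyadic levels $\mu^+-\om2^{-j}$ and letting $\varepsilon\to0$ at the end — which, together with the measure smallness of $\{v\le\log 2\}$, feeds a single sup-estimate driving $v$ below $M-1$ on a smaller cube. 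Once a measure-reduction mechanism of this kind is in place, the remaining pieces — the De Giorgi iterations and the dyadic-scale bootstrap — are routine.
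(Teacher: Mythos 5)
Your proposal is correct, but for Part (II) it follows the classical De Giorgi route rather than the one taken in this note. Concerning Part (I), the paper gives no proof at all: the sup-bound is quoted from the literature (De Giorgi, DiBenedetto, Ladyzhenskaya--Ural'tseva), with only the remark that it persists for the generalized classes $[GDG]_p^+$; your iteration plus the interpolation step from the $L^p$ to the $L^1$ right-hand side is exactly the cited classical argument, so there is nothing to compare there. For Part (II), you run the standard two-step scheme: shrinking the measure of the super-level sets across dyadic levels via De Giorgi's isoperimetric inequality, and then a ``critical mass'' iteration to lower the supremum; this is correct and self-contained, but it is precisely the machinery the note is designed to bypass. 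The paper instead works with Moser's logarithmic functions $w_1=\ln\frac{\omega}{2(\mu^+-u)}$ and $w_2=\ln\frac{\omega}{2(u-\mu^-)}$: since $\mu^+-u$ and $u-\mu^-$ belong to $[DG]_p^-$, a logarithmic energy estimate (Lemma \ref{Lm:2:3}) bounds $\int_{K_\rho}|Dw_i|^p$ by $\int_{K_R}|w_i|$; the alternative on which half of $K_\rho$ carries $[w_1\le 0]$ or $[w_2\le 0]$ feeds the Poincar\'e-type Lemma \ref{Lm:2:2}, and an interpolation argument then yields a uniform bound on the $L^1$ average of $|w_1|$ over $K_\rho$; finally, because $w_{1+}$ lies in a generalized DeGiorgi class (Lemma \ref{Lm:2:1}, via composition with a convex monotone function), the sup-estimate of Part (I) applied to $w_{1+}$ gives $\sup_{K_{\rho/2}}w_{1+}\le C$, i.e.\ $u\le\mu^+-\frac{\omega}{2e^{C}}$ on $K_{\rho/2}$, and the oscillation decay follows by the same standard iteration you use. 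In effect, your closing remark about replacing the isoperimetric step by a Moser-type logarithmic bound anticipates the paper's actual mechanism, though the implementation differs: no truncation at a level $M$ and no measure-smallness relaunch are needed, because the whole reduction is delegated to a single application of the sup-bound (I) to $w_{1+}$. What each approach buys: yours is elementary and needs nothing beyond the energy inequality and Sobolev embedding; the paper's is shorter once (I) is granted, avoids the measure-shrinking/isoperimetric analysis entirely, and exhibits H\"older continuity as a direct consequence of the boundedness estimate applied to a logarithm, at the price of the auxiliary lemmas on compositions and on $\int|D\ln u|^p$.
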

Although the DeGiorgi classes were originally modelled after linear elliptic equations with
bounded and measurable coefficients, DeGiorgi's approach to prove local boundedness and local H\"older continuity
of their solutions made no reference to any equation, and
such functional classes are general enough to include local minima or Q-minima of rather general functionals,
which may not admit an Euler equation.
A similar theorem, regarding the local boundedness and local H\"older continuity of solutions to 
quasilinear elliptic equations, was proved by Moser in \cite{Moser}; see also \cite{Nash}, \cite[Chap. 9]{LU}.
However, the original proof of Moser kept referring to the equation.

It has been noted in \cite{DT} that Moser's idea 
 can be employed to show local boundedness of functions in $[DG]_p^+(E;\gm)$, i.e., Part (I) of Theorem \ref{Thm:1:1}.
Thanks to the recent remarks in \cite{DG} on properties of DeGiorgi classes, we are able
 to give an alternative proof of local H\"older continuity for functions
in $[DG]_p(E;\gm)$ based on Moser's idea (\cite{Moser}). This is the main goal of this note. 
For a somewhat analogous approach see also \cite{DMV}.

\

\noi{\it Acknowledgement.} The authors thank Professor Emmanuele DiBenedetto for suggesting the argument
in this note and Professor Ugo Gianazza for discussions and remarks, which greatly
helped to improve the final version of the note.

\section{Some Lemmas}
The generalized DeGiorgi Classes $[GDG]_p^{\pm}(E;\gm)$ are the collection of functions $u\in W^{1,p}_{\loc}(E)$,
for some $p>1$, satisfying
\begin{equation*}
\int_{K_{\rho}(y)}|D(u-k)_{\pm}|^p\,dx
\le\frac{\gm}{(R-\rho)^p}\bigg(\frac{R}{R-\rho}\bigg)^{Np}\int_{K_{R}(y)}|(u-k)_{\pm}|^p\,dx.
\end{equation*}
It is noteworthy that the conclusion of Theorem \ref{Thm:1:1}, Part (I) still holds for functions in $[GDG]_p^{+}(E;\gm)$.
One just has to note that the extra term on the right-hand side of the definition of $[GDG]_p^{+}(E;\gm)$ is 
``homogeneous" with respect to the diameter of the cubes, then one could repeat the proof in
 \cite[Theorem 2.1, Chap. 10]{DB}, \cite[Lemma 2.1]{DT} or \cite[Lemma 5.4, Chap. 2]{LU}.

For ease of notation, we write $\om=\om(2\rho)$. We introduce two functions
that are due to Moser (\cite{Moser}):
\begin{equation*}
w_1=\vp_1(u)=\ln \frac{\om}{2(\mu^+-u)},\quad w_2=\vp_2(u)=\ln \frac{\om}{2(u-\mu^-)}.
\end{equation*}
\begin{lemma}\label{Lm:2:1}
Suppose $u\in[DG](E;\gm)$. 
The two functions $w_{1+}$ and $w_{2+}$ are both in $[GDG]^+_p(E;\bar{\gm})$
for some $\bar{\gamma}$ depending only on the data.
\end{lemma}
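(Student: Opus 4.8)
The plan is to show that the DeGiorgi energy inequality for $u$, combined with the structure of the logarithmic functions $\vp_1,\vp_2$, transforms into a generalized DeGiorgi inequality for the truncations $w_{1+},w_{2+}$. I will treat $w_1$ in detail; the case of $w_2$ is symmetric (replace $u$ by $-u$ and $\mu^+$ by $-\mu^-$). Fix a pair of concentric cubes $K_\rho(y)\subset K_R(y)\subset E$ and a level $k\in\rr$. The first observation is that one may assume $k\ge 0$: since $w_{1+}=\max(w_1,0)$ takes values in $[0,\infty)$ precisely on the set where $u\ge \mu^+-\frac{\om}{2}$, for $k<0$ we have $(w_{1+}-k)_+=w_{1+}-k$ on a set where $Dw_{1+}$ may be supported, but in fact $D(w_{1+}-k)_+=D w_{1+}$ a.e.\ on $\{w_{1+}>k\}$; handling $k\le 0$ separately (it reduces to the estimate with $k=0$ after absorbing a harmless constant, or one checks the inequality trivially) lets me focus on $k>0$.

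The heart of the matter is a chain-rule computation. On the set $\{w_1>k>0\}$ we have $u>\mu^+-\frac{\om}{2}e^{-k}$, so $\mu^+-u$ is bounded below away from $0$ is false — rather it is bounded \emph{above} by $\frac{\om}{2}e^{-k}$ and, crucially, $\mu^+-u\ge \mu^+-\essup u\ge 0$; the point is that $\vp_1'(u)=\frac{1}{\mu^+-u}$ is large exactly where $w_1$ is large. Writing $v=(w_{1}-k)_+$ one computes $Dv=\frac{1}{\mu^+-u}\,Du$ a.e.\ on $\{w_1>k\}$ and $Dv=0$ elsewhere. To bound $\int_{K_\rho(y)}|Dv|^p$ I want to insert a truncation of $u$ into the DeGiorgi inequality for $u\in[DG]^-_p$ — note it is the \emph{minus} class that controls $Du$ on $\{u>\text{level}\}$. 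Apply the $[DG]^-$ inequality with level $h_k:=\mu^+-\frac{\om}{2}e^{-k}$ on the pair $K_\rho(y)\subset K_R(y)$: this gives $\int_{K_\rho(y)}|D(u-h_k)_-|^p\le \frac{\gm}{(R-\rho)^p}\int_{K_R(y)}|(u-h_k)_-|^p$, but I actually need control of $Du$ where $u>h_k$, which is $(u-h_k)_+$, governed instead by $[DG]^+$. So the correct move is: on $\{w_1>k\}$, $\mu^+-u<\frac{\om}{2}e^{-k}$, hence $\frac{1}{\mu^+-u}> \frac{2}{\om}e^{k}$, and I estimate $|Dv|\le \frac{1}{\mu^+-u}|Du|$. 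I then split $K_\rho(y)$ at a mid-level and use the identity $|Dw_1|\le |D w_1|$... the clean route is: since $w_1 = \varphi_1(u)$ with $\varphi_1$ convex increasing on $(-\infty,\mu^+)$, for the truncation one shows $D(w_1-k)_+ = \varphi_1'(u)\,D(u - \varphi_1^{-1}(k))_+$, and then bounds $\varphi_1'(u)$ above on the relevant set by comparing $u$ to the level where $w_1=k+1$ versus $w_1=k$, producing the exponential gain that, after a dyadic/telescoping summation over the levels $w_1\in(k,k+1)$ — or more directly by using the boundedness $0\le w_{1+}$ and the elementary bound $\varphi_1'(u)\le \frac{2}{\om}e^{w_1+1}$ — lets one replace $\int |Du|^p$ by something comparable to $\int_{K_R(y)}|(w_1-k)_+|^p\,dx$ plus the error term from the $R/(R-\rho)$ factor. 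The extra factor $\big(\tfrac{R}{R-\rho}\big)^{Np}$ in the $[GDG]$ definition is exactly what absorbs the measure of the cube appearing when one passes from an $L^p$ bound on $(u-h)_\pm$ to a bound in terms of $w_{1+}$, since $\om$ and such exponential factors are dimensionless while the ratio $R/(R-\rho)$ carries the needed homogeneity — this is precisely the remark made right before the lemma about the $[GDG]$ term being "homogeneous" with respect to cube diameter.

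Concretely, the key steps in order: (1) reduce to $k\ge 0$ by the elementary observations above; (2) fix $K_\rho(y)\subset K_R(y)\subset E$, let $\ell_k=\mu^+-\frac{\om}{2}e^{-k}$ so that $\{w_1>k\}=\{u>\ell_k\}$, and compute $D(w_1-k)_+ = \chi_{\{u>\ell_k\}}\,\frac{1}{\mu^+-u}\,Du$; (3) pointwise bound $\frac{1}{\mu^+-u}\le \frac{2}{\om}e^{k+1}$ on a \emph{sub}level set $\{k<w_1\le k+1\}$ and, on $\{w_1>k+1\}$, use instead the crude bound $\frac{1}{\mu^+-u}\le \frac{2}{\om}e^{w_1}$ together with $e^{w_1}\le $ (polynomial in $w_1$) $\cdot\, e$... actually the cleanest is $s\mapsto e^{s}$ vs. $(1+s)^p$: for $w_1>k+1$ write $\frac{1}{(\mu^+-u)^p}=(\tfrac{2}{\om})^p e^{p w_1}$ and note this is not directly $\le C(w_1-k)_+^p$; so instead one iterates over the dyadic levels $k_j=k+j$, $j\ge 0$, applying step (3)'s local bound on each annulus $\{k_j<w_1\le k_{j+1}\}$ and summing $\sum_j e^{p k_j}\cdot(\text{measure terms})$ against the geometric decay coming from $(w_1-k)_+\ge j$ there — this converges and yields the claim with a data-dependent $\bar\gm$; (4) apply the $[DG]^+_p$ inequality for $u$ on each level $\ell_{k_j}$ over $K_\rho(y)\subset K_R(y)$, bound $\int_{K_R(y)}|(u-\ell_{k_j})_+|^p\le (\om e^{-k_j})^p |K_R(y)|$, and collect: $|K_R(y)|=(2R)^N$, while $(R-\rho)^{-p}$ from DeGiorgi combines with the $\om^{-p}e^{p k_j}$ from step (3) to give, after summation, $\frac{\bar\gm}{(R-\rho)^p}\big(\tfrac{R}{R-\rho}\big)^{Np}\int_{K_R(y)}(w_1-k)_+^p\,dx$; (5) conclude $w_{1+}\in[GDG]^+_p(E;\bar\gm)$, and repeat verbatim for $w_2$ using $[DG]^-_p$.

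I expect the main obstacle to be step (3)–(4): making the passage from "$\int|Du|^p$ weighted by the large factor $\varphi_1'(u)^p$" to "$\int|(w_1-k)_+|^p$" quantitative and uniform, since naively the weight $e^{pw_1}$ grows faster than any power of $(w_1-k)_+$. The resolution — stratifying by the integer levels of $w_1$, using the DeGiorgi inequality at each level (which supplies an exponentially small $L^p$-norm of $(u-\ell_{k_j})_+$ because $u$ is that close to $\mu^+$ there), and summing a convergent series — is the genuine content of the lemma; everything else is bookkeeping. A secondary subtlety is keeping track of which one-sided class ($[DG]^+$ vs. $[DG]^-$) controls the gradient on which super/sub-level set, and verifying that the $\big(R/(R-\rho)\big)^{Np}$ slack in $[GDG]$ is both necessary (because of the $|K_R(y)|$ factor) and sufficient (because $R/(R-\rho)\ge 1$ and no worse power of it appears).
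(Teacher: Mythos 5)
There is a genuine gap, and it sits exactly at the step you yourself flag as ``the genuine content'': the passage (3)--(4) from the weighted gradient integral to $\int_{K_R}(w_1-k)_+^p$. On the annulus $\{k_j<w_1\le k_{j+1}\}$ you bound $\varphi_1'(u)^p\le (2/\omega)^pe^{p(k_j+1)}$ and then invoke the $[DG]^+_p$ inequality at the level $\ell_{k_j}=\mu^+-\tfrac{\omega}{2}e^{-k_j}$, estimating $\int_{K_R}(u-\ell_{k_j})_+^p\le(\omega e^{-k_j})^p|K_R|$. The two exponentials cancel, so each annulus contributes a term of size $\gamma e^p(R-\rho)^{-p}|K_R|$: there is no decay in $j$ at all (your ``geometric decay coming from $(w_1-k)_+\ge j$'' is not present in the estimate as written, since you threw away the superlevel-set structure by using the full cube measure), so the sum over the infinitely many annuli diverges ($w_1$ is not a priori bounded above). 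Worse, even a single such term is an \emph{additive} quantity of order $R^N/(R-\rho)^p$, and the $[GDG]^+_p$ inequality has no such term on its right-hand side: a multiplicative factor $\bigl(R/(R-\rho)\bigr)^{Np}\ge1$ in front of $\int_{K_R}(w_{1+}-k)_+^p$ cannot absorb an additive $|K_R|$ when that integral is small, so your explanation of what the extra $[GDG]$ factor is ``for'' is also incorrect. Replacing $|K_R|$ by $|\{w_1>k_j\}\cap K_R|$ and using Chebyshev fixes the annuli $j\ge1$ (decay $j^{-p}$, summable since $p>1$), but the $j=0$ annulus still produces $|\{w_1>k\}\cap K_R|$, which is not controlled by $\int_{K_R}(w_1-k)_+^p$ when $w_1-k$ is small on a large set.

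The missing ingredient is precisely the convexity of $\varphi_1$, which you never actually use quantitatively: since $u-\ell_k=\tfrac{\omega}{2}\bigl(e^{-k}-e^{-w_1}\bigr)$ and $1-e^{-t}\le t$, one has the secant-slope bound $(u-\ell_k)_+\le\tfrac{\omega}{2}e^{-k}(w_1-k)_+$, and feeding \emph{this} into the DeGiorgi inequality converts its right-hand side directly into $\int_{K_R}(w_1-k)_+^p$ with a data-dependent constant; with that refinement your stratification scheme can be closed. This convexity/monotonicity of $\varphi_{1+}$ (and the fact that $\varphi_{2+}$ is convex, non-increasing and vanishes for $u\ge\mu^-+\tfrac{\omega}{2}$) is exactly what the paper's proof relies on: it simply invokes the lemmas of DiBenedetto--Gianazza on convex monotone functions of members of DeGiorgi classes, rather than redoing the level-set computation. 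A secondary point you should also address is why $w_{1+}\in W^{1,p}_{\loc}$ at all (so that the chain rule $D(w_1-k)_+=\chi_{\{u>\ell_k\}}\,Du/(\mu^+-u)$ is legitimate); in the paper this is where the logarithmic estimate of Lemma \ref{Lm:2:3} enters, or one must argue via bounded truncations of $w_1$.
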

\begin{proof} 
Since $\vp_{1+}: (-\infty,\mu^+)\to\rr$ is convex and non-decreasing,
the function $w_{1+}\in [GDG]^+_p(E;\bar{\gm})$ by Lemma 2.1 in \cite{DG}.
Since $\vp_{2+}: (\mu^-,\infty)\to\rr$ is convex, non-increasing and vanishes for $u\ge\mu^-+\frac{\om}{2}$,
the function $w_{2+}\in [GDG]^+_p(E;\bar{\gm})$ by Lemma 2.2 in \cite{DG}.
\end{proof}

\noi Next we need a lemma whose proof can be found on p.356 of \cite{DB} or p.54 of \cite{LU}.
\begin{lemma}\label{Lm:2:2}
Let $v\in W^{1,1}(K_r(y))$ and assume that the set $[v=0]$ has positive measure.
There exists a positive constant $C$ depending only on $N$, such that
\[
\int_{K_{r}(y)}|v|\,dx\le C\frac{r^{N+1}}{|[v=0]|}\int_{K_{ r}(y)}|Dv|\,dx.
\]
\end{lemma}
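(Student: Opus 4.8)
The plan is to reduce the statement to the classical Poincar\'e inequality on a cube, together with one elementary averaging step over the zero set. After a translation we may assume $y=0$; write $A:=[v=0]$, and let $\bar v:=\frac{1}{|K_r|}\int_{K_r}v\,dx$ denote the mean value of $v$ over $K_r$.

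The first ingredient is the scalar Poincar\'e inequality
\[
\int_{K_r}|v-\bar v|\,dx\le C\,r\int_{K_r}|Dv|\,dx,\qquad C=C(N),
\]
which I would either simply cite or, if a self-contained argument is wanted, recover from the pointwise potential estimate $|v(x)-\bar v|\le C\int_{K_r}|Dv(w)|\,|x-w|^{1-N}\,dw$; the latter is obtained first for smooth $v$ by integrating $Dv$ along the segments joining $x$ to the points of $K_r$ and then switching to polar coordinates centred at $x$, next extended to $v\in W^{1,1}(K_r)$ by density, and one concludes by Fubini using $\int_{K_r}|x-w|^{1-N}\,dx\le C\,r$.

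The second ingredient uses the hypothesis. Since $v=0$ a.e.\ on $A$ and $|A|>0$,
\[
|\bar v|=\Big|\frac{1}{|A|}\int_A(\bar v-v)\,dz\Big|\le\frac{1}{|A|}\int_{K_r}|v-\bar v|\,dx\le\frac{C\,r}{|A|}\int_{K_r}|Dv|\,dx.
\]
Combining this with $\int_{K_r}|v|\,dx\le\int_{K_r}|v-\bar v|\,dx+|K_r|\,|\bar v|$ and $|K_r|=(2r)^N$ gives
\[
\int_{K_r}|v|\,dx\le\Big(C\,r+\frac{C\,r^{N+1}}{|A|}\Big)\int_{K_r}|Dv|\,dx.
\]
Finally, $A\subset K_r$ forces $|A|\le(2r)^N$, hence $r\le 2^N r^{N+1}/|A|$, so the first term on the right is absorbed into the second, which is exactly the asserted inequality.

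There is no serious obstacle here: the only substantive input is the scalar Poincar\'e inequality, everything else being bookkeeping with dimensional constants. If one does insist on proving Poincar\'e from scratch, the single delicate point is the potential estimate — one must pass to polar coordinates about the base point $x$ so that the radial integration contributes precisely the factor that renders the kernel $|x-w|^{1-N}$ locally integrable over $K_r$, which is also where the exponent $N+1$ (and nothing worse) originates. The passage from smooth functions to $W^{1,1}$ is harmless since $v$, $Dv$, and the relevant means all converge in $L^1$.
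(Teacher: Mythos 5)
Your argument is correct: the mean-value Poincar\'e inequality on the cube, $\int_{K_r}|v-\bar v|\,dx\le C(N)\,r\int_{K_r}|Dv|\,dx$, combined with the observation that $v=0$ a.e.\ on $A=[v=0]$ gives $|\bar v|\le \frac{1}{|A|}\int_{K_r}|v-\bar v|\,dx$, and the final absorption using $|A|\le(2r)^N$ is sound, so the stated inequality follows with $C=C(N)$. Note, however, that the paper does not prove this lemma itself but cites the classical argument (DiBenedetto, p.~356; Ladyzhenskaya--Ural'tseva, p.~54), and that argument proceeds differently: one writes, for a.e.\ $x\in K_r$ and a.e.\ $z\in[v=0]$, $v(x)=v(x)-v(z)$ as an integral of $Dv$ along the segment from $z$ to $x$, averages over $z\in[v=0]$, and obtains the pointwise potential bound $|v(x)|\le \frac{C\,r^N}{|[v=0]|}\int_{K_r}|Dv(w)|\,|x-w|^{1-N}\,dw$, from which the lemma follows by integrating in $x$ and Fubini. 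Your route is more modular --- it delegates all the segment-integration work to a single citable inequality and adds only an elementary averaging step, making for a shorter write-up --- while the classical route yields the stronger pointwise estimate with the factor $|[v=0]|^{-1}$ already in place (useful elsewhere in DeGiorgi-type arguments) and avoids introducing the mean value altogether; if you insist on proving Poincar\'e from scratch, as you sketch, the two proofs rest on the same potential estimate and differ only in whether one averages over the zero set before or after integrating along segments.
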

\noi Finally, we recall Proposition 3.3 from \cite{DG}.
\begin{lemma}\label{Lm:2:3}
Let $u\in [DG]^-_p(E;\gm)$ be  non-negative and bounded above by a positive constant $M$.
Then 
\begin{equation*}
\int_{K_\rho(y)}|D\ln u|^p\,dx\le\frac{\gm p}{(R-\rho)^p}\int_{K_R(y)}\ln\frac{M}{u}\,dx
\end{equation*}
for any pair of cubes $K_{\rho}(y)\subset K_{R}(y)\subset E$.
\end{lemma}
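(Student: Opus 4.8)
The plan is to deduce the logarithmic estimate from the defining inequality of $[DG]_p^{-}(E;\gm)$, applied not at a single level but over the whole family of sublevel sets $[u<t]$ of $u$, and then to glue these together by the elementary layer-cake identity $u^{-p}=p\int_{u}^{M}t^{-p-1}\,dt+M^{-p}$. Throughout we may assume $u>0$ a.e.\ (otherwise the right-hand side is $+\infty$), so that $\ln u$ is well defined and $D\ln u=Du/u$ a.e. Since the computation below involves quantities that are finite only a posteriori, one should strictly carry it out for the bounded functions $\min\{\ln\tfrac{M}{u},L\}\in W^{1,p}_{\loc}$, $L>0$, and then let $L\to\infty$, using that $|D\min\{\ln\tfrac{M}{u},L\}|\uparrow|D\ln u|$ a.e.\ so that monotone convergence applies; I suppress this routine truncation below.

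Fix $K_\rho(y)\subset K_R(y)\subset E$. Multiplying the identity above by $|Du|^{p}$, integrating over $K_\rho(y)$, and applying Tonelli's theorem to the first term yields
\[
\int_{K_\rho(y)}|D\ln u|^{p}\,dx=p\int_{0}^{M}\frac{1}{t^{p+1}}\bigg(\int_{[u<t]\cap K_\rho(y)}|Du|^{p}\,dx\bigg)\,dt+\frac{1}{M^{p}}\int_{K_\rho(y)}|Du|^{p}\,dx .
\]
Because $D(u-t)_{-}=-Du$ a.e.\ on $[u<t]$ and $D(u-t)_{-}=0$ elsewhere, the definition of $[DG]_p^{-}(E;\gm)$ at the level $k=t$ gives, for each $t>0$,
\[
\int_{[u<t]\cap K_\rho(y)}|Du|^{p}\,dx=\int_{K_\rho(y)}|D(u-t)_{-}|^{p}\,dx\le\frac{\gm}{(R-\rho)^{p}}\int_{K_R(y)}(t-u)_{+}^{p}\,dx ,
\]
and, in particular, taking $t=M$ and using $0\le u\le M$, one gets $\int_{K_\rho(y)}|Du|^{p}\,dx\le\frac{\gm}{(R-\rho)^{p}}\int_{K_R(y)}(M-u)^{p}\,dx$.

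Next I would substitute these two bounds and reduce everything to one-variable inequalities. For the ``bulk'' term I would use $(t-u)_{+}^{p}\le t^{p-1}(t-u)_{+}$ (since $0\le(t-u)_{+}\le t$ and $p\ge1$), then apply Tonelli once more, together with the exact computation $\int_{0}^{M}t^{-2}(t-u)_{+}\,dt=\ln\tfrac{M}{u}-1+\tfrac{u}{M}$. For the ``tail'' term I would use $(M-u)^{p}=M^{p}(1-\tfrac{u}{M})^{p}\le M^{p}(1-\tfrac{u}{M})$. Adding the two contributions gives
\[
\int_{K_\rho(y)}|D\ln u|^{p}\,dx\le\frac{\gm p}{(R-\rho)^{p}}\int_{K_R(y)}\Big(\ln\tfrac{M}{u}-1+\tfrac{u}{M}\Big)\,dx+\frac{\gm}{(R-\rho)^{p}}\int_{K_R(y)}\Big(1-\tfrac{u}{M}\Big)\,dx ,
\]
and since $1-\tfrac{u}{M}\ge0$ a.e., the two correction terms combine into $-\gm(p-1)(R-\rho)^{-p}\int_{K_R(y)}(1-\tfrac{u}{M})\,dx\le0$, which leaves exactly the asserted inequality.

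The step I expect to require the most care is the ``tail'', i.e.\ the part of the layer-cake integral at levels $t\ge M$: one cannot simply extend the $t$-integration to $+\infty$ and then bound $\int_{[u<t]\cap K_\rho(y)}|Du|^{p}\,dx=\int_{K_\rho(y)}|Du|^{p}\,dx$ by $[DG]_p^{-}$ at level $t$, since $\int^{\infty}t^{-p-1}(t-u)_{+}^{p}\,dt$ diverges; the remedy is to cut the representation of $u^{-p}$ at $t=M$ and to control the single fixed quantity $\int_{K_\rho(y)}|Du|^{p}\,dx$ using only the level $k=M$. A secondary point: keeping the full identity for $\int_{0}^{M}t^{-2}(t-u)_{+}\,dt$, rather than the cruder bound $\le\ln\tfrac{M}{u}$, is what lets the constant be taken as small as $\gm p$ — in fact the argument even leaves a negative remainder.
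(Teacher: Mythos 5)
Your proposal is correct, and every step checks out: the layer--cake identity $u^{-p}=p\int_u^M t^{-p-1}\,dt+M^{-p}$ is exact for $0<u\le M$; Tonelli applies since all integrands are nonnegative; $\int_{[u<t]\cap K_\rho}|Du|^p=\int_{K_\rho}|D(u-t)_-|^p$ is legitimate because $Du=0$ a.e.\ on level sets; the bounds $(t-u)_+^p\le t^{p-1}(t-u)_+$ and $(M-u)^p\le M^{p-1}(M-u)$ are valid for $p>1$ and $0\le u\le M$; and the bookkeeping $\int_0^M t^{-2}(t-u)_+\,dt=\ln\frac{M}{u}-1+\frac{u}{M}$ together with the level-$M$ estimate does yield the stated constant $\gm p$ with a nonpositive remainder $-\gm(p-1)(R-\rho)^{-p}\int_{K_R}(1-\frac{u}{M})\,dx$. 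Your identification of the tail as the delicate point is also right: one must cut the representation at $t=M$ rather than integrate the $[DG]_p^-$ inequality over all levels, and the truncation $\min\{\ln\frac{M}{u},L\}$ is the standard way to give meaning to the left-hand side. Note, however, that the paper itself offers no proof of this lemma: it is quoted as Proposition~3.3 of the DiBenedetto--Gianazza paper on properties of DeGiorgi classes, so there is no internal argument to compare against. Your proof is the natural self-contained one, obtained by testing the $[DG]_p^-$ inequality at a continuum of levels and averaging against $p\,t^{-p-1}\,dt$; it is close in spirit to the cited source (which likewise exploits the full family of levels rather than a single one), and it has the virtue of exhibiting exactly where the factor $p$ in the constant comes from. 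The only points worth tightening in a written version are the localization of the assumption $u>0$ a.e.\ to $K_R(y)$ (it is only $|[u=0]\cap K_R(y)|>0$ that makes the right-hand side infinite) and a sentence justifying the passage $L\to\infty$ when the right-hand side is finite, e.g.\ via monotone convergence for the gradients of the truncations.
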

\section{Proof of H\"older Continuity}
Without loss of generality, we may take $y=0$. 
Let us first apply Lemma \ref{Lm:2:3} to $w_1$ and $w_2$.
Indeed, since $u\in [DG]_p(E;\gm)$ we have both $\mu^+-u$
and $u-\mu^-$ members of $[DG]_p^-(E;\gm)$. Therefore, Lemma \ref{Lm:2:3}
yields
\[
\int_{K_{\rho}}\bigg|D\ln \frac{\om}{2(\mu^+-u)}\bigg|^p\,dx
\le\frac{\gm}{(R-\rho)^p}\int_{K_{R}}\ln\frac{\om}{\mu^+-u}\,dx,
\]
that is,
\begin{equation}\label{Eq:3:1}
\int_{K_{\rho}}|Dw_1|^p\,dx\le\frac{\gm}{(R-\rho)^p}\int_{K_{R}}|w_1|\,dx+\gm\frac{R^{N}}{(R-\rho)^p}.
\end{equation}
Similarly, we have
\[\int_{K_{\rho}}|Dw_2|^p\,dx\le\frac{\gm}{(R-\rho)^p}\int_{K_{R}}|w_2|\,dx+\gm\frac{R^{N}}{(R-\rho)^p}.\]

Now we go with two alternatives: either 
\[|[u\le \mu^+-\frac{\om}2]\cap K_{\rho}|\ge\frac12|K_\rho|\quad\text{or}\quad
|[u\ge \mu^-+\frac{\om}2]\cap K_{\rho}|\ge\frac12|K_\rho|.\]
In terms of $w_1$ and $w_2$, this may be rephrased as either
\[|[w_1\le 0]\cap K_\rho|\ge\frac12 |K_\rho|\quad\text{or}\quad|[w_2\le 0]\cap K_\rho|\ge\frac12|K_\rho|.\]
Suppose the first alternative is in force, the second alternative being similar. 
We may employ Lemma \ref{Lm:2:2} and the fact that
$w_1\ge-\ln2$ to obtain that
\begin{align*}
\int_{K_\rho}|w_1|\,dx&=\int_{K_\rho}w_{1+}\,dx+\int_{K_\rho}w_{1-}\,dx\\
&\le C\rho\int_{K_\rho}|Dw_{1+}|\,dx+C\rho^N.\\
\end{align*}
The integral term on the right-hand side is estimated by H\"older's inequality, Young's inequality and \eqref{Eq:3:1} as
\begin{align*}
&C\rho\int_{K_\rho}|Dw_{1+}|\,dx\\
&\le C \rho^{1+N-\frac{N}p}\bigg(\int_{K_\rho}|Dw_{1+}|^p\,dx\bigg)^{\frac1p}\\
&\le C \rho^{1+N-\frac{N}p}\bigg(\frac{\gm}{(R-\rho)^p}\int_{K_{R}}|w_1|\,dx+\gm\frac{R^{N}}{(R-\rho)^p}\bigg)^{\frac1p}\\
&\le C \frac{\rho^{1+N-\frac{N}p}}{R-\rho}\bigg(\int_{K_R}|w_1|\,dx\bigg)^{\frac1p}+
C\frac{\rho^{1+N}}{R-\rho}\bigg(\frac{R}{\rho}\bigg)^{\frac{N}p}.\\
\end{align*}
Thus we obtain
\begin{equation}\label{Eq:3:2}
\int_{K_\rho}|w_1|\,dx\le 
C \frac{\rho^{1+N-\frac{N}p}}{R-\rho}\bigg(\int_{K_R}|w_1|\,dx\bigg)^{\frac1p}+
C\frac{\rho^{1+N}}{R-\rho}\bigg(\frac{R}{\rho}\bigg)^{\frac{N}p}+C\rho^N.
\end{equation}
An interpolation argument (see \cite[Lemma 4.3, Chap. I]{DB-DPE}) yields that
\begin{equation}\label{Eq:3:3}
\frac{1}{\rho^N}\int_{K_{\rho}}|w_1|\,dx\le C(data).
\end{equation}

An application of Lemma \ref{Lm:2:1} gives that $w_{1+}\in [GDG]_p^+(E;\bar{\gm})$.
As a result, Theorem \ref{Thm:1:1}, Part (I) holds for $w_{1+}$. The supreme estimate together
with \eqref{Eq:3:3} yields that
\[
\sup_{K_{\frac{\rho}2}}w_{1+}\le C\,\dashint_{K_{\rho}}w_{1+}\,dx\le C(data),
\]
which implies
\[
\essup_{K_{\frac{\rho}2}}u\le \mu^+-\frac{1}{2e^{C}}\om.
\]
Therefore
\[
\essosc_{K_{\frac{\rho}2}}u\le(1-\frac{1}{2e^{C}})\om.
\]
A standard iteration finishes the proof.


\begin{thebibliography}{99}
\bibitem{DG} E. De Giorgi, Sulla Differenziabilit\`a e l'Analiticit\`a degli Integrali Multipli Regolari,
{\it Mem. Accad. Sci. Torino Cl. Sci. Fis. Mat. Natur. 3(3), (1957), 25--43.}
\bibitem{DB-DPE} E. DiBenedetto, ``Degenerate Parabolic Equations", Universitext, Springer-Verlag,
New York, 1993.
\bibitem{DB} E. DiBenedetto, ``Partial Differential Equations", Second edition. 
Cornerstones. Birkh\"auser Boston, Inc., Boston, MA, 2010
\bibitem{DBG} E. DiBenedetto and U. Gianazza, 
Some properties of De Giorgi classes, {\it Rend. Istit. Mat. Univ. Trieste, {\bf48}, (2016), 245--260.}
\bibitem{DT} E. DiBenedetto and N. S. Trudinger, Harnack inequalities for quasi-minima of variational integrals,
{\it Ann. Inst. Henri Poincar\'e, Analyse Non Lin\'eaire, {\bf 1}(4), (1984), 295--308.}
\bibitem{DMV} F. G. D\"uzg\"un, P. Marcellini and V. Vespri,
An alternative approach to the {H}\"older continuity of solutions to some elliptic equations,
{\it Nonlinear Anal., {\bf 94}, (2014), 133--141.}	
\bibitem{LU} O. A. Ladyzhenskaya and N. N. Ural'tseva, 
``Linear and Quasilinear Elliptic Equations", Academic Press, New York, 1968.
\bibitem{Moser} J. Moser, A new proof of de Giorgi's theorem concerning the regularity problem
for elliptic differential equations, {\it Comm. Pure Appl. Math., 13, (1960), 457--468.}
\bibitem{Nash} J. Nash, Continuity of solutions of parabolic and elliptic equations,
{\it Amer. J. Math., 80, (1958), 931--954.}
\end{thebibliography}
\end{document}